\renewcommand{\qed}{\hfill\small{$\square$}\normalsize}
\theoremstyle{definition}
\newtheorem{lemma}{Lemma}[section]
\newtheorem{definition}[lemma]{Definition}
\newtheorem{proposition}[lemma]{Proposition}
\newtheorem{theorem}[lemma]{Theorem}
\newtheorem{example}{Example}
\newtheorem{remark}{Remark}
\numberwithin{equation}{section}
\renewcommand{\qed}{\hfill\small{$\square$}\normalsize}
\DeclareFixedFont{\Acknowledgment}{OT1}{cmr}{bx}{n}{14pt}
\begin{document}

\title{\bf Unified $(\alpha,\beta)$-Flows on Triangulated Manifolds with Two and Three Dimensions}
\author{Huabin Ge, Ming Li}
\maketitle

\begin{abstract}
In this paper, we introduce a framework of $(\alpha,\beta)$-flows on triangulated manifolds with two and three dimensions, which unifies
several discrete curvature flows previously defined in the literature.
\end{abstract}

\footnotetext{{\it 2010 Mathematics Subject Classification.}\ 68U05; 65D17.}
\footnotetext{{\it Key words and phrases.} circle packing metric, discrete Ricci flow, $(\alpha,\beta)$-flows, triangulation.}
\footnotetext{{\it running head.} Unified $(\alpha,\beta)$-Flows on Triangulated Manifolds}

\section{Background and Preliminaries}\label{Introduction}
In his famous book \cite{T1}, Thurston introduced the circle packing metric, which was used to study low dimensional topology. In \cite{Ha1},
Hamilton introduced the well-known Ricci flow, which is a powerful tool to deform Riemannian metrics. More important to our subject, as a pioneering work, Chow and Luo \cite{CL1} established the intrinsic connection between Hamilton's surface Ricci flow and Thurston's circle patterns. They first introduced the combinatorial (discrete) Ricci flows on triangulated surface. Inspired by their work, Glickenstein \cite{G1} first introduced the combinatorial Yamabe flows on triangulated 3-dimensional manifolds. Since then, discrete curvature flow has been becoming popular for its usefulness in engineering fields, especially in the Graphics and Image Processing areas.

Suppose $M^n$ is a closed manifold with dimension $n=2$ or $3$. Given a triangulation $\mathcal{T}=\{\mathcal{T}_0,\mathcal{T}_1,\cdots,\mathcal{T}_n\}$ on $M$, where $\mathcal{T}_i$ represents the set of $i$-simplices ($0\leq i\leq n$). In what follows, $(M^n, \mathcal{T})$ will be referred to as a triangulated $n$-manifold. All the vertices are ordered one by one, marked by $v_1, \cdots, v_N$, where $N=\mathcal{T}_0^\sharp$ is the number of vertices. We use $i\sim j$ to denote that the vertices $i$ and $j$ are adjacent if there is an edge $\{ij\}\in\mathcal{T}_1$. Throughout this paper, all functions $f: \mathcal{T}_0\rightarrow \mathds{R}$ will be regarded as column vectors in $\mathds{R}^N$. And we denote $C(\mathcal{T}_0)$ as the set of functions defined on $\mathcal{T}_0$.

The most natural way to define a discrete metric on a triangulated manifold $(M^n, \mathcal{T})$ is to evaluate a length $l_{ij}$ for each edge $i\sim j$ directly. Alternatively,
discrete metrics can also be defined on all vertices, based on which the length $l_{ij}$ can be derived indirectly. Thurston introduced the circle packing metric for $n=2$, while Cooper and Rivin considered the sphere packing metric for $n=3$. Now we review these two definitions:

\begin{definition}
(Thurston's circle packing metric) Given a triangulated surface $(M^2, \mathcal{T})$. Let $\Phi: \mathcal{T}_1\rightarrow [0,\frac{\pi}{2}]$ be a function assigning each edge $\{ij\}$ a weight $\Phi_{ij}$. Each map $r:\mathcal{T}_0\rightarrow (0,+\infty)$ is called a circle packing metric.
\end{definition}

For given $(M^2, \mathcal{T}, \Phi)$, we attach each edge $\{ij\}$ a length
\begin{equation}\label{definition of length of edge}
l_{ij}=\sqrt{r_i^2+r_j^2+2r_ir_j\cos \Phi_{ij}}.
\end{equation}
Thurston proved \cite{T1} that the lengths $\{l_{ij}, l_{jk}, l_{ik}\}$ satisfy the triangle inequality
on each face $\{ijk\}\in \mathcal{T}_2$, which ensures that the face $\{ijk\}$ could be
realized as an Euclidean triangle with lengths $\{l_{ij}, l_{jk}, l_{ik}\}$. Thus the space of all circle packing metrics is exactly $$\mathds{R}^N_{>0}\triangleq (0,+\infty)^N.$$

The triangulated surface $(M^2, \mathcal{T}, \Phi)$ can be seen as
gluing many Euclidean triangles coherently. However, this gluing produces singularities at vertices, which is described as discrete curvature.
\begin{definition}(Discrete Gaussian curvature)
Suppose $\theta_i^{jk}$ is the inner angle of triangle $\triangle v_iv_jv_k$
at vertex $i$, the classical discrete Gauss curvature at $i$ is defined as
\begin{equation}\label{classical Gauss curv}
K_i=2\pi-\sum_{\{ijk\}\in \mathcal{T}_2}\theta_i^{jk},
\end{equation}
where the sum is taken over all the triangles with $v_i$ as one of their vertices.
\end{definition}
For discrete Gaussian curvature $K_i$, there is a
discrete version of Gauss-Bonnet identity
\begin{equation}\label{Gauss-Bonnet}
\sum_{i\in \mathcal{T}_0}K_i=2\pi \chi(M).
\end{equation}

\begin{definition}(Cooper $\&$ Rivin's ball packing metric)
Given a triangulated surface $(M^3, \mathcal{T})$. Each map $r:\mathcal{T}_0\rightarrow (0,+\infty)$ derives a length
\begin{equation}
l_{ij}=r_{i}+r_{j}
\end{equation}
for each $i\thicksim j$. If for each $\{i,j,k,l\}\in \mathcal{T}_3$, $l_{ij},l_{ik},l_{il},l_{jk},l_{jl},l_{kl}$
determines an Euclidean tetrahedron, then call $r$ a (non-degenerate) ball packing metric.
\end{definition}

It is pointed out \cite{G1} that a tetrahedron $\{i,j,k,l\}\in \mathcal{T}_3$ generated by four positive radii $r_{i},r_{j},r_{k},r_{l}$
can be realized as an Euclidean tetrahedron if and only if
\begin{equation}\label{nondegeneracy-condition}
Q_{ijkl}=\left(\frac{1}{r_{i}}+\frac{1}{r_{j}}+\frac{1}{r_{k}}+\frac{1}{r_{l}}\right)^2-
2\left(\frac{1}{r_{i}^2}+\frac{1}{r_{j}^2}+\frac{1}{r_{k}^2}+\frac{1}{r_{l}^2}\right)>0.
\end{equation}
Thus the space of all ball packing metrics is exactly
$$\mathfrak{M}_{\mathcal{T}}=\left\{\;r\in\mathds{R}^N_{>0}\;\big|\;Q_{ijkl}>0, \;\forall \{i,j,k,l\}\in \mathcal{T}_3\;\right\}.$$
Cooper and Rivin \cite{CR} proved that $\mathfrak{M}_{\mathcal{T}}$ is a simply connected open subset of $\mathds{R}^N_{>0}$, but not convex.
The ball packing metric induces a piecewise linear metric which makes the curvature flat everywhere on $M^{3}-\mathcal{T}_1$ while singular on $\mathcal{T}_1$. Thus Cooper and Rivin defined a discrete scalar curvature concentrated on all vertices.
\begin{definition}(Cooper $\&$ Rivin's discrete scalar curvature)
Denote $\alpha_{ijkl}$ as the solid angle at a vertex $i$ in a single Euclidean tetrahedron $\{i,j,k,l\}\in \mathcal{T}_3$, then the discrete scalar curvature at vertex $i$ is
\begin{equation}\label{def-cr-curv}
K_{i}=4\pi-\sum_{\{i,j,k,l\}\in \mathcal{T}_3}\alpha_{ijkl},
\end{equation}
where the sum is taken over all $\{j,k,l\}\in \mathcal{T}_2$ such that $\{i,j,k,l\}\in \mathcal{T}_3$.
\end{definition}
In what follows we call Cooper $\&$ Rivin's discrete curvature as CR-curvature for short. To study the CR-curvature $K_i$, the first of the authors and Xu \cite{GX3} introduced the following $3$-dimensional $\alpha$-flow,
\begin{equation}\label{def-alpha-flow}
\frac{dr_i}{dt}=s_{\alpha}r_i^{\alpha}-K_i,
\end{equation}
where $s_{\alpha}=\frac{\sum_i K_ir_i}{\sum_ir_i^{\alpha+1}}$. They proved that if the $\alpha$-flow (\ref{def-alpha-flow}) converges, there exists a metric $r^*$ whose $\alpha$-order curvature $K_i/r_i^{\alpha}$ is a constant for each $i\in V$. On the contrary, assume $r^*$ is a metric whose $\alpha$-order curvature is a constant, and the first positive eigenvalue of $-\Delta_{\alpha}$ (see (\ref{Def-alpha-Laplacian}) for a definition) at $r^*$ is bigger than $\alpha s_{\alpha}^*$, then $r^*$ is a asymptotically stable point of the $\alpha$-flow (\ref{def-alpha-flow}). In this paper, we shall generalize the $\alpha$-flow (\ref{def-alpha-flow}) to one with the following more universal form
$$\frac{dg_i}{dt}=s_{\alpha}r_i^{\alpha}-K_i,$$
where $g_i=\ln r_i$ or $g_i=r_i^\sigma$, $\sigma\in\mathds{R}$. We explain the idea behind this generalization.
If we consider the conformal deformation of the discrete metric $r_i$, we may take $g_i=\ln r_i$. This is inspired by Chow and Luo's pioneer work \cite{CL1}. If we consider the deformation of the conical metric, we may take take $g_i=r_i^{\sigma}$ as a metric (of $\sigma$-order). This is inspired by the viewpoint of Riemannian geometry. A piecewise flat metric is a singular Riemannian metric on $M$, which produces conical singularities at all vertices. For any $\sigma\in\mathbb{R}$, a metric $g$ with conical singularity at a point can be expressed as
\begin{equation}\label{def-universal-flow}
g(z)=e^{f(z)}\frac{dz\wedge d\bar{z}}{|z|^{2(1-\sigma)}}
\end{equation}
locally. Choosing $f(z)=-\ln\sigma^{2}$, then $g(z)=|dz^{\sigma}|^{2}$. Comparing $r^{\sigma}$ with $|dz^{\sigma}|$, the $\sigma$-order metric
$r^{\sigma}$ may be considered as a discrete analogue of conical metric to some extent. In this paper, we will express the flow (\ref{Def-unified-flow}) as $\frac{dr_i}{dt}=s_{\alpha}r_i^{\beta}-K_ir_i^{\beta-\alpha}$ by leaving out a constant $\sigma$ and taking $\beta=1+\alpha-\sigma$ (see Definition \ref{def-above-flow} below). We will prove that the flow (\ref{Def-unified-flow}) exhibits similar convergence properties as the $\alpha$-flow (\ref{def-alpha-flow}) in section \ref{section-convergence}.

\section{Definition of unified $(\alpha,\beta)$-flow}\label{Section-unified flow}
For any $\alpha \in \mathds{R}$, the first of the authors and Xu \cite{GX2,GX3} studied discrete circle (ball) packing metrics with all curvature $K_i/r_i^{\alpha}$ equal to a constant. It is easy to see that, if $\frac{K_i}{r_i^{\alpha}}\equiv s_{\alpha}$ for each vertex $i$, where $s_{\alpha}$ is a constant, then
\begin{equation}\label{def-s-alpha}
s_{\alpha}=\frac{\sum K_ir_i^{n-2}}{\|r\|_{\alpha+{n-2}}^{\alpha+{n-2}}}.
\end{equation}
Following the definition in \cite{GM2}, we call this type of metric as ``discrete $\alpha$-quasi-Einstein metric" or ``constant $\alpha$-curvature metric".

One of the most important problem we concern is to understand whether there are discrete $\alpha$-quasi-Einstein metrics in $\mathfrak{M}_{\mathcal{T}}$ for $n=3$, or in $\mathds{R}^N_{>0}$ for $n=2$. For the $n=2$ case, this problem is basically resolved in \cite{GX2,GX3}. Especially for triangulated surfaces with $\alpha\chi(M)\leq0$, both the combinatorial-topological conditions and the analytical conditions are given for the existence of discrete $\alpha$-quasi-Einstein metrics. Furthermore, discrete $\alpha$-quasi-Einstein metrics can be obtained by evolving discrete curvature flows or minimizing discrete Ricci potentials. For the $n=3$ case, very few combinatorial-topological conditions are known for the existence of discrete $\alpha$-quasi-Einstein metrics. We want to study discrete $\alpha$-quasi-Einstein metrics by introducing an unified $(\alpha,\beta)$-flow:
\begin{definition}\label{def-above-flow}
Let $s_{\alpha}$ be defined in (\ref{def-s-alpha}). If $n=2$, let $K_i$ be defined in (\ref{classical Gauss curv}). If $n=3$, let $K_i$ be defined in (\ref{def-cr-curv}). For any $\alpha, \beta\in \mathds{R}$, the unified $(\alpha,\beta)$-flow is defined as
\begin{equation}\label{Def-unified-flow}
\frac{dr_i}{dt}=s_{\alpha}r_i^{\beta}-K_ir_i^{\beta-\alpha}.
\end{equation}
\end{definition}

The prototype of the unified $(\alpha,\beta)$-flow (\ref{Def-unified-flow}) is Chow-Luo's combinatorial Ricci flow \cite{CL1} in dimension two, which can be expressed as $\frac{dr_i}{dt}=K_{av}r_i-K_ir_i$. Take $(\alpha,\beta)=(0,1)$, and $n=2$, then the unified $(\alpha,\beta)$-flow (\ref{Def-unified-flow}) becomes Chow-Luo's flow. The unified $(\alpha,\beta)$-flow (\ref{Def-unified-flow}) modeled its form after that of the well known smooth Yamabe flow $\frac{\partial g}{\partial t}=(s-R)g$, where $g$ is a smooth Riemannian metric tensor, $R$ is the scalar curvature and $s$ is the average curvature. If we express the unified $(\alpha,\beta)$-flow (\ref{Def-unified-flow}) as $\frac{dr_i}{dt}=(s_{\alpha}-\frac{K_i}{r_i^{\alpha}})r_i^{\beta}$ and further take $\beta=1$, then it's easy to see that the term $\frac{K_i}{r_i^{\alpha}}$ plays similar role as the term $R$ plays in the smooth Yamabe flow. To some extent, $\frac{K_i}{r_i^{\alpha}}$ is the discrete analogy of the smooth scalar curvature $R$.

Now it's the time to say a bit more about the motivations to introduce the unified $(\alpha,\beta)$-flow. The first motivation is to unify the various discrete curvature flows previously defined in the literature, see Section \ref{Section-examples} for more details. The second motivation is
to approach the following combinatorial $\alpha$-Yamabe problem, which is modeled after the smooth Yamabe problem and was previously raised by the first of the authors and Xu \cite{GX3}.\\

\noindent\textbf{The Combinatorial $\alpha$-Yamabe Problem.}
Given a $2$-dimensional (or $3$-dimensional) manifold $M^2$ (or $M^3$) with triangulation $\mathcal{T}$,
find a circle (or ball) packing metric with constant combinatorial $\alpha$-curvature in the combinatorial
conformal class $\mathds{R}_{>0}^N$ (or $\mathfrak{M}_{\mathcal{T}}$).\\

The unified $(\alpha,\beta)$-flow provides a natural way to approach the combinatorial $\alpha$-Yamabe problem. We want to deform an arbitrary metric $r(0)$ to a discrete $\alpha$-quasi-Einstein metric $r^*$, one effective way is to evolve it according to an ODE system $r_i'(t)=f_i(r(t))$ with $r^*$ as its critical point. Thus $f_i(r^*)$ equals to zero. The easiest way is to choose $f_i(r)=s_{\alpha}-\frac{K_i}{r_i^{\alpha}}$. In fact, we found this selection of $f_i(r)$ is too restrictive, this fact motivates us to relax it as  $f_i(r)=s_{\alpha}r_i^{\alpha}-K_ir_i^{\beta-\alpha}$. In the following Theorem \ref{them-conveg-imply-solvable}, we will show that if the solution to the $(\alpha,\beta)$-flow converges, then the combinatorial $\alpha$-Yamabe problem is solvable.

We shall show that the unified $(\alpha,\beta)$ flow (\ref{Def-unified-flow}) is generally not a ``normalization" of the following flow
\begin{equation}\label{gejiang's-flow}
\frac{dr_i}{dt}=-K_ir_i^{\beta-\alpha}
\end{equation}
except for $(\alpha,\beta)=(0,1)$, where the word ``normalization" means that the solutions to this two flows are related by a change of the scale in space and a change of the parametrization in time. Assume the solution
of the unified $(\alpha,\beta)$ flow (\ref{Def-unified-flow}) differs from the solution of the flow (\ref{gejiang's-flow}) only by a change of the scale in space and a change of the parametrization in time. Let $t,r,K,s_{\alpha}$ denote the variables for the flow (\ref{gejiang's-flow}), and $\tilde{t},\tilde{r},\tilde{K},\tilde{s}_{\alpha}$ for the flow (\ref{Def-unified-flow}). Let $r(t),t\in[0,T)$ be a solution of the flow (\ref{gejiang's-flow}), while $\tilde{r}(\tilde{t}),\tilde{t}\in[0,\tilde{T})$ be a solution of the flow (\ref{Def-unified-flow}). Set $\tilde{r}(\tilde{t})=\varphi(t)r(t)$, where $\varphi(t)>0$ is a scaling factor and is independent of the vertices. Obviously, $\tilde{K}(\tilde{t})=K(t)$, $\tilde{s}_{\alpha}(\tilde{t})=s_{\alpha}(t)\varphi^{-\alpha}(t)$, and $\tilde{s}_{\alpha}\tilde{r}_{i}^{\alpha}=s_{\alpha}r_{i}^{\alpha}$. Hence
$$\frac{d\tilde{r}_{i}}{d\tilde{t}}=\frac{d(\varphi r_i)}{dt}\frac{dt}{d\tilde{t}}=
\Big(r_i\frac{d\varphi}{dt}-\varphi\tilde{K_{i}}r_i^{\beta-\alpha}\Big)\frac{dt}{d\tilde{t}}.$$
On the other hand, $\tilde{r}$ satisfies the equation (\ref{Def-unified-flow}),
i.e.,
$$\frac{d\tilde{r}_{i}}{d\tilde{t}}=\tilde{s}_{\alpha}\tilde{r}_{i}^\beta-\tilde{K_{i}}\tilde{r}_{i}^{\beta-\alpha}=
s_{\alpha}r_{i}^{\beta}\varphi^{\beta-\alpha}-K_{i}r_{i}^{\beta-\alpha}\varphi^{\beta-\alpha}.$$
Comparing the above two expressions about $d\tilde{r}_{i}/d\tilde{t}$, we obtain $\beta-\alpha=1$, $dt/d\tilde{t}=1$ and $\frac{1}{\varphi}\frac{d\varphi}{dt}=s_{\alpha}r_{i}^{\alpha}$ for every $i\in V$. Since $\varphi$ is independent of $i\in V$, we further get $\alpha=0$ and $\beta=1$. Hence the unified $(\alpha,\beta)$ flow (\ref{Def-unified-flow}) is a normalization of the flow (\ref{gejiang's-flow}) if and only if $(\alpha,\beta)=(0,1)$.

\section{Some examples}\label{Section-examples}
There are several discrete curvature flows on two and three dimensional triangulated manifolds. We list some of them below. Firstly let us take a look at the 2-dimensional examples.
\begin{example}\label{Example-all-2d-flows}
Given a triangulated surface $(M^2, \mathcal{T})$, consider Thurston's circle packing metric $r$ with fixed weight $\Phi$. Denote the discrete $\alpha$-curvature as $R_{\alpha,i}=K_i/r_i^{\alpha}$. Notice that $s_{\alpha}=2\pi\chi(M)/\|r\|_{\alpha}^{\alpha}$ by discrete Gauss-Bonnet formula. Then we have the following six different discrete flows on triangulated surfaces:
\begin{description}
  \item[(1)] $\dot{u}_i=K_{av}-K_i$, where $u_i=\ln r_i$, $K_{av}=2\pi\chi(M)/N$, see \cite{CL1};
  \item[(2)] $\dot{u}_i=s_{\alpha}r_i^{\alpha}-K_i$, where $u_i=\ln r_i$, $\alpha\in \mathds{R}$, see \cite{GX3};
  \item[(2)$'$] $\dot{u}_i=s_{\alpha}r_i^{\alpha}-K_i$, where $u_i=\ln r_i^{\alpha}$, $\alpha\in \mathds{R}$ and $\alpha\neq0$, differs from (2) by a constant $\alpha$;
  \item[(3)] $\dot{g}_i=(R_{av}-R_{i})g_i$, where $g_i=r_i^2$, $R_{av}=s_2$ and $R_i=R_{2,i}$, see \cite{GX2};
  \item[(4)] $\dot{u}_i=s_{\alpha}-R_{\alpha,i}$, where $u_i=\ln r_i$, $\alpha\in \mathds{R}$, see \cite{GX2};
  \item[(4)$'$] $\dot{u}_i=s_{\alpha}-R_{\alpha,i}$, where $u_i=\ln r_i^{\alpha}$, $\alpha\in \mathds{R}$ and $\alpha\neq0$, differs from (4) by a constant $\alpha$.
\end{description}
\end{example}

Next we see the cases with dimension three.

\begin{example}\label{Example-all-3d-flows}
Given a triangulated 3-manifold $(M^3, \mathcal{T})$, consider Cooper and Rivin's sphere packing metric $r$ and $\alpha$-curvature $R_{\alpha,i}=K_i/r_i^2$. Then we have the following nine different discrete flows on $M^3$:

\begin{description}
  \item[(1)] $\dot{u}_i=-K_i$, where $u_i=\ln r_i$, see \cite{G1};
  \item[(2)] $\dot{r}_i=(s_0-K_i)r_i$, a normalization of the flow in (1), see \cite{GJ};
  \item[(3)] $\dot{r}_i=\lambda r_i-K_i$, with $\lambda=s_1$, see \cite{GX2};
  \item[(4)] $\dot{r}_i=s_{\alpha}r_i^{\alpha}-K_i$, $\alpha\in \mathds{R}$, see \cite{GX3};
  \item[(5)] $\dot{u}_i=s_{\alpha}-R_{\alpha,i}$ (or $\dot{r}_i=s_{\alpha}r_i-\frac{K_i}{r_i^{\alpha-1}}$), where $u_i=\ln r_i$, $\alpha\in \mathds{R}$, see \cite{GM1};
  \item[(5)$'$] $\dot{u}_i=s_{\alpha}r_i^{\alpha}-K_i$, where $u_i=r_i^{\alpha}$, $\alpha\in \mathds{R}$ and $\alpha\neq0$, differs from (5) by a constant $\alpha$;
  \item[(6)] $\dot{g}_i=(R_{av}-R_i)g_i$, where $g_i=r_i^2$, $R_{av}=s_2$, $R_i=K_i/r_i^2$, see \cite{GX2};
  \item[(7)] $\dot{u}_i=s_{\alpha}r_i^{\alpha}-K_i$, where $u_i=\ln r_i$, $\alpha\in \mathds{R}$, see \cite{GM2};
  \item[(7)$'$] $\dot{u}_i=s_{\alpha}r_i^{\alpha}-K_i$, where $u_i=\ln r_i^{\alpha}$, $\alpha\in \mathds{R}$ and $\alpha\neq0$, differs from (7) by a constant $\alpha$.
\end{description}
\end{example}

It is remarkable that all the discrete flows presented in Examples \ref{Example-all-2d-flows} and \ref{Example-all-3d-flows} can be unified by the $(\alpha,\beta)$-flow (\ref{Def-unified-flow}):

\begin{itemize}
  \item For the cases in Example \ref{Example-all-2d-flows}. If $\alpha=0$ and $\beta=1$, then the $(0, 1)$-flow is just the flow (1). If $\beta=\alpha+1$, then the $(\alpha, \alpha+1)$-flow is just the flow (2). If $\beta=1$, then the $(\alpha, 1)$-flow is just the flow (5). Notice that the flow (3) differs from the $(\alpha, \alpha+1)$-flow by a constant $\alpha$. Moreover, the flow (4) is a special case of the flow (6) with $\alpha=2$. The flow (6) differs from the $(\alpha, 1)$-flow by a constant $\alpha$.
  \item For the cases in Example \ref{Example-all-3d-flows}. If $\beta=\alpha=1$, then the $(1, 1)$-flow is just the flow (2). If $\beta=\alpha$, then the $(\alpha, \alpha)$-flow is just the flow (3). If $\beta=1$, then the $(\alpha, 1)$-flow is just the flow (4). If $\beta=\alpha+1$, then the $(\alpha, \alpha+1)$-flow is just the flow (7). Notice that the flow (1) is in fact a type of $(\alpha, \alpha+1)$-flow without normalization. The flow (5) differs from the $(\alpha, 1)$-flow by a constant $\alpha$. The flow (6) is a special case of the flow (5) with $\alpha=2$. The last flow (8) differs from the $(\alpha, \alpha+1)$-flow (7) by a constant $\alpha$.
\end{itemize}

\section{Basic properties of $(\alpha,\beta)$-flow}\label{section-convergence}
Since the two most representative flows (2) and (4) in Example \ref{Example-all-2d-flows} are have intensively been studied in \cite{GX2,GX3}, we mainly study 3-dimensional unified $(\alpha,\beta)$-flow in the remaining of this section. It's obviously to prove

\begin{proposition}\label{prop-hypersurface}
Let $\delta=\alpha-\beta+n-1$. If $\delta\neq0$, then $\sum_{i=1}^Nr_i^{\delta}(t)$ is invariant along the $(\alpha, \beta)$-flow. If $\delta=0$, then $\prod_{i=1}^Nr_i(t)$ is invariant along the flow.
\end{proposition}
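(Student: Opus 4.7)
The plan is a direct differentiation, designed so that the two exponents produced by the flow align exactly with the numerator and denominator of the scaling constant $s_\alpha$ defined in (\ref{def-s-alpha}).

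First, suppose $\delta=\alpha-\beta+n-1\neq 0$. I would compute
\begin{equation*}
\frac{d}{dt}\sum_{i=1}^N r_i^\delta
=\delta\sum_{i=1}^N r_i^{\delta-1}\frac{dr_i}{dt}
=\delta\,s_\alpha\sum_{i=1}^N r_i^{\delta+\beta-1}-\delta\sum_{i=1}^N K_i\,r_i^{\delta+\beta-\alpha-1},
\end{equation*}
using the defining equation (\ref{Def-unified-flow}) of the $(\alpha,\beta)$-flow. The point is that the two exponents simplify to $\delta+\beta-1=\alpha+n-2$ and $\delta+\beta-\alpha-1=n-2$, which are precisely the exponents appearing in the denominator $\|r\|_{\alpha+n-2}^{\alpha+n-2}$ and the numerator $\sum K_i r_i^{n-2}$ of $s_\alpha$ in (\ref{def-s-alpha}). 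Thus $s_\alpha\sum r_i^{\alpha+n-2}=\sum K_i r_i^{n-2}$, and the two terms cancel.

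For the second case $\delta=0$, i.e. $\beta=\alpha+n-1$, I would instead differentiate $\ln\prod_i r_i=\sum_i\ln r_i$. A nearly identical computation gives
\begin{equation*}
\frac{d}{dt}\sum_{i=1}^N\ln r_i
=\sum_{i=1}^N r_i^{-1}\frac{dr_i}{dt}
=s_\alpha\sum_{i=1}^N r_i^{\beta-1}-\sum_{i=1}^N K_i r_i^{\beta-\alpha-1},
\end{equation*}
and the condition $\beta-\alpha-1=n-2$, $\beta-1=\alpha+n-2$ again forces the right-hand side to vanish by the definition of $s_\alpha$. Therefore $\prod_i r_i$ is constant along the flow.

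There is no real obstacle here: the proposition is essentially a bookkeeping statement that records the conservation law built into the normalization $s_\alpha$. The only thing to be mildly careful about is making sure the quantity $\sum r_i^\delta$ (for $\delta\neq 0$) is differentiated correctly when $\delta<0$ or non-integer, but since $r_i>0$ along the flow this causes no issue. Once the two exponent identities $\delta+\beta-1=\alpha+n-2$ and $\delta+\beta-\alpha-1=n-2$ are observed, the result is immediate.
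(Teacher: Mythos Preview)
Your proof is correct and is exactly the direct differentiation the paper has in mind; the paper in fact omits the argument entirely, stating only that the proposition is ``obviously'' proved. Your identification of the exponent identities $\delta+\beta-1=\alpha+n-2$ and $\delta+\beta-\alpha-1=n-2$, together with the definition of $s_\alpha$, is the whole content.
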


\begin{theorem}\label{them-conveg-imply-solvable}
The critical points of the $(\alpha, \beta)$-flow (\ref{Def-unified-flow}) is a constant $\alpha$-curvature metric, and the solution $r(t)$ to this flow always exists locally. Moreover, if the solution $r(t)$ converges, then the combinatorial $\alpha$-Yamabe problem is solvable, that is,
there exists a constant $\alpha$-curvature metric in the combinatorial conformal class.
\end{theorem}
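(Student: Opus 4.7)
The plan is to handle the three assertions --- characterization of critical points, local existence, and convergence implying solvability --- in that order. The first is immediate: at a critical point $r^*$ the right-hand side of (\ref{Def-unified-flow}) vanishes, and dividing by the positive quantity $(r_i^*)^{\beta-\alpha}$ yields $K_i(r^*)/(r_i^*)^\alpha = s_\alpha(r^*)$ for every vertex $i$, which is precisely the constant $\alpha$-curvature condition. The converse, that any constant $\alpha$-curvature metric is a critical point, is obvious from the definition of $s_\alpha$ in (\ref{def-s-alpha}).

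Local existence is a standard application of the Picard--Lindel\"of theorem. The right-hand side $f_i(r) = s_\alpha(r)\,r_i^\beta - K_i(r)\,r_i^{\beta-\alpha}$ is smooth on the admissible open set (which is $\mathds{R}^N_{>0}$ when $n=2$ and $\mathfrak{M}_{\mathcal{T}}$ when $n=3$), because the inner angles $\theta_i^{jk}$ or the solid angles $\alpha_{ijkl}$ depend smoothly on the edge lengths, the edge lengths depend smoothly on $r$, and the normalization $s_\alpha$ is a smooth rational expression in $r$ and $K$ on that set.

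The heart of the proof is the third assertion. Assuming $r(t) \to r^*$ with $r^*$ in the admissible open set, the plan is to produce a sequence $t_k \to \infty$ with $r'(t_k) \to 0$; continuity of $f$ then forces $f(r^*) = 0$, so $r^*$ is a critical point and hence a constant $\alpha$-curvature metric by the first step, solving the combinatorial $\alpha$-Yamabe problem. To produce such a sequence, fix $i$ and apply the mean value theorem on each interval $[k, k+1]$ to obtain $\tau_{k,i} \in (k, k+1)$ with $r_i'(\tau_{k,i}) = r_i(k+1) - r_i(k)$; convergence of $r_i$ forces this difference to tend to $0$. A finite diagonal extraction over $i = 1, \dots, N$ yields a single sequence $t_k \to \infty$ along which $r'(t_k) \to 0$ componentwise.

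The main subtlety is precisely this last step: convergence of $r(t)$ does not \emph{a priori} imply convergence of $r'(t)$, but the mean value theorem averages out any oscillation. A second implicit point is that $r^*$ must lie in the admissible open set for $f(r^*)$ to even be defined; this is built into the notion of ``converges'' intended here, for otherwise some $Q_{ijkl}$ could degenerate to zero and the curvatures at the limit would be undefined. Once both are granted, the argument is a short diagonalization plus a continuity statement, with no need to invoke any of the variational or Laplacian structure developed later in the paper.
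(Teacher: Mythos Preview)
Your proof is correct and follows the same route as the paper, which simply invokes Picard's theorem for local existence and then cites ``classical ODE theory'' for the fact that a limit of an autonomous flow must be an equilibrium; your write-up is considerably more detailed than the paper's one-paragraph argument. One small correction: the ``finite diagonal extraction'' step is both unnecessary and, as stated, does not actually produce a single sequence $t_k$ along which every component of $r'(t_k)$ tends to zero (the times $\tau_{k,i}$ differ with $i$). Simply run your continuity argument componentwise instead: for each fixed $i$ the sequence $\tau_{k,i}\to\infty$ already gives $f_i(r^*)=\lim_k f_i\big(r(\tau_{k,i})\big)=\lim_k r_i'(\tau_{k,i})=0$. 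Alternatively, note that since $r'=f(r)$ with $f$ continuous and $r(t)\to r^*$, the derivative $r'(t)$ does converge (to $f(r^*)$), so your remark that it ``does not a priori'' converge, while true for a general ODE, is moot here.
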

\begin{proof}
Note that, in $\mathfrak{M}_{\mathcal{T}}$, $K_i$ as a function of $r=(r_1,\cdots,r_N)^T$ is smooth and hence locally Lipschitz continuous. By Picard theorem in classical ODE theory, flow (\ref{Def-unified-flow}) has a unique solution $r(t)$, $t\in[0, \epsilon)$ for some $\epsilon>0$. The convergence of $r(t)$ means that there exists a metric $r^*\in\mathfrak{M}_{\mathcal{T}}$, such that $r(t)\rightarrow r^*$ according to the Euclidean topology. By the classical ODE theory, $r^*$ should be the critical point of flow (\ref{Def-unified-flow}), which implies the conclusion above.\qed
\end{proof}

It follows natural to know when the unified $(\alpha,\beta)$-flow converges. The following lemma is very useful:
\begin{lemma}\label{Lemma-3d-Lambda matrix}
(\cite{CR, G2, Ri})
Suppose $(M, \mathcal{T})$ is a triangulated 3-manifold with sphere packing metric $r$,
$\mathcal{S}=\sum K_ir_i$ is the Einstein-Hilbert-Regge functional. Then we have
\begin{equation}
\nabla_r\mathcal{S}=K.
\end{equation}
If we set
\begin{displaymath}
\Lambda=Hess_r\mathcal{S}=
\frac{\partial(K_{1},\cdots,K_{N})}{\partial(r_{1},\cdots,r_{N})}=
\left(
\begin{array}{ccccc}
 {\frac{\partial K_1}{\partial r_1}}& \cdot & \cdot & \cdot &  {\frac{\partial K_1}{\partial r_N}} \\
 \cdot & \cdot & \cdot & \cdot & \cdot \\
 \cdot & \cdot & \cdot & \cdot & \cdot \\
 \cdot & \cdot & \cdot & \cdot & \cdot \\
 {\frac{\partial K_N}{\partial r_1}}& \cdot & \cdot & \cdot &  {\frac{\partial K_N}{\partial r_N}}
\end{array}
\right),
\end{displaymath}
then $\Lambda$ is positive semi-definite with rank $N-1$ and
the kernel of $\Lambda$ is the linear space spanned by the vector $r$.
\end{lemma}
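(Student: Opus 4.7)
The plan is to handle the three assertions in turn. For the gradient formula $\nabla_r\mathcal{S}=K$, I would first show that the $1$-form $\sum_i r_i\, dK_i$ on the space of metrics vanishes identically; once that is in hand, direct differentiation of $\mathcal{S}=\sum_i K_i r_i$ gives $\partial\mathcal{S}/\partial r_k=K_k+\sum_i r_i\,\partial K_i/\partial r_k=K_k$. Expanding $K_i=4\pi-\sum_{T\ni i}\alpha_i^T$ and swapping sums reduces the identity to a local statement on each tetrahedron $T=\{i,j,k,l\}$, namely $\sum_{v\in T}r_v\,d\alpha_v^T=0$. Writing $\alpha_v^T$ via spherical Gauss--Bonnet as the sum of the three dihedral angles at edges incident to $v$ minus $\pi$, I would then collect terms by edge, yielding $\sum_v r_v\,d\alpha_v^T=\sum_{e=uw\in T}(r_u+r_w)\,d\theta_e=\sum_{e\in T} l_e\,d\theta_e$, which vanishes by the classical Schl\"{a}fli identity for a Euclidean tetrahedron. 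The ball-packing relation $l_{ij}=r_i+r_j$ is precisely the bridge that makes Schl\"{a}fli applicable.

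To identify the kernel of $\Lambda$, I would exploit that every solid angle $\alpha_v^T$ depends only on the similarity class of the tetrahedron, so scaling all four radii by $\lambda>0$ leaves every $\alpha_v^T$ invariant and $K_i$ is homogeneous of degree zero in $r$. Euler's identity then yields $\sum_j r_j\,\partial K_i/\partial r_j=0$ for every $i$, which combined with the symmetry $\partial K_i/\partial r_j=\partial K_j/\partial r_i$ (implied by $\nabla_r\mathcal{S}=K$) gives $\Lambda r=0$, hence $\mathrm{span}(r)\subseteq\ker\Lambda$.

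For positive semi-definiteness and the exact dimension of the kernel, I would decompose $\mathcal{S}=4\pi\sum_i r_i-\sum_{T\in\mathcal{T}_3}\mathcal{V}_T$ with $\mathcal{V}_T:=\sum_{v\in T}r_v\alpha_v^T$, so that $\Lambda=-\sum_T \mathrm{Hess}(\mathcal{V}_T)$, where each summand is an $N\times N$ matrix supported on the $4\times 4$ block indexed by the vertices of $T$. The key local fact, established in \cite{CR, Ri, G2} by explicit computation under the non-degeneracy condition $Q_{ijkl}>0$, is that on each tetrahedron the $4\times 4$ block of $-\mathrm{Hess}(\mathcal{V}_T)$ is positive semi-definite of rank $3$ with one-dimensional kernel spanned by $(r_i,r_j,r_k,r_l)$. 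Summing over $T$ yields $\Lambda\succeq 0$; for the reverse inclusion $\ker\Lambda\subseteq\mathrm{span}(r)$, if $\Lambda v=0$ then $v^\top\Lambda v=0$ forces $v|_T\in\mathrm{span}(r|_T)$ on each $T$, so $v_i=c_T r_i$ for some scalar $c_T$; since any two tetrahedra of $\mathcal{T}_3$ can be connected by a path sharing $2$-faces (three common vertices) and $r_i>0$, the scalars must agree globally, giving $v=c\,r$. The principal obstacle is the local positive semi-definiteness on a single Euclidean tetrahedron, which is where $Q_{ijkl}>0$ actually enters; I would import this from the cited works rather than rederive it.
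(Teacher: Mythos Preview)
The paper does not supply its own proof of this lemma; it is quoted from \cite{CR,G2,Ri} and used as a black box. Your outline is correct and is essentially the argument found in those references: the Schl\"afli formula combined with $l_{ij}=r_i+r_j$ yields $\sum_v r_v\,d\alpha_v^T=0$ and hence $\nabla_r\mathcal{S}=K$; homogeneity of $K_i$ gives $\Lambda r=0$; and the decomposition $\Lambda=-\sum_T\mathrm{Hess}(\mathcal{V}_T)$ together with the local Cooper--Rivin rigidity (each $4\times 4$ block is positive semi-definite with kernel $\mathrm{span}(r|_T)$) plus a connectivity argument on the dual complex pins down $\ker\Lambda=\mathrm{span}(r)$. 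There is nothing to compare against in the present paper, and no gap in your plan; the one step you explicitly import---positive semi-definiteness on a single nondegenerate tetrahedron---is exactly the content of the cited works.
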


We recall the definition of $\alpha$-order combinatorial Laplacian.
\begin{definition}
(\cite{GX2,GX3}) Given a triangulated $n$-manifold $(M, \mathcal{T})$ with $n=2$ or $3$. For any $\alpha\in \mathds{R}$, the $\alpha$-order combinatorial Laplacian (``$\alpha$-Laplacian" for short) $\Delta_{\alpha}:C(\mathcal{T}_0)\rightarrow C(\mathcal{T}_0)$ is defined as
\begin{equation}\label{Def-alpha-Laplacian}
\Delta_{\alpha} f_i=\frac{1}{r_i^{\alpha}}\sum_{j\sim i}(-\frac{\partial K_i}{\partial r_j}r_j)(f_j-f_i)
\end{equation}
for $f\in C(\mathcal{T}_0)$.
\end{definition}
The $\alpha$-Laplacian (\ref{Def-alpha-Laplacian}) can also be written in a matrix form
\begin{equation}
\Delta_{\alpha}=-\Sigma^{-\alpha}\Lambda \Sigma
\end{equation}
with $\Delta_{\alpha} f=-\Sigma^{-\alpha}\Lambda \Sigma f$ for each $f\in C(\mathcal{T}_0)$, where $\Sigma=diag\big\{r_1,\cdots,r_N\big\}$.

\begin{theorem}\label{Thm-3d-alpha-beta-flow}
Given a triangulated manifold $(M^3, \mathcal{T})$. Suppose $r^*\in\mathbb{S}^{3}$ is a constant $\alpha$-curvature metric satisfying $\lambda_1(-\Delta_{\alpha})>\alpha s_{\alpha}^*$, or more specifically, $r^*\in\mathbb{S}^{3}$ is a constant $\alpha$-curvature metric with $\alpha s_{\alpha}^*\leq0$. If $\|r(0)-r^*\|$ is small enough, then the solution of the unified $(\alpha, \beta)$-flow (\ref{Def-unified-flow})
exists for all time and converges to $r^*$.\qed
\end{theorem}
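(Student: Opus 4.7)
The plan is to prove local asymptotic stability by linearization. Since the flow leaves the invariant hypersurface of Proposition \ref{prop-hypersurface} fixed, it suffices to show that the linearization at $r^*$, when restricted to the tangent space of this hypersurface, has all eigenvalues with negative real part; the classical Lyapunov linearization theorem then yields both long-time existence and exponential convergence of $r(t)\to r^*$ for initial data in a small neighborhood of $r^*$.

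First I would perform the change of variable $g_i = r_i^\sigma$ with $\sigma = \alpha-\beta+1$ (or $g_i = \ln r_i$ when $\sigma = 0$), which is a local diffeomorphism on positive radii and reduces the $(\alpha,\beta)$-flow to the clean form $\dot g_i = \sigma(s_\alpha r_i^\alpha - K_i)$; the spectrum of the Jacobian, and therefore local asymptotic stability, is unaffected. Differentiating the right-hand side at $g^*$, using Lemma \ref{Lemma-3d-Lambda matrix} (whence $\Lambda r^* = 0$) and the critical-point identity $K_i^* = s_\alpha^* r_i^{*\alpha}$, a direct computation yields
\begin{equation*}
\nabla s_\alpha\big|_{r^*} = -\frac{\alpha s_\alpha^*}{S}\, r^{*\alpha}, \qquad S := \sum_k r_k^{*\alpha+1},
\end{equation*}
so the Jacobian takes the three-term form
\begin{equation*}
L \;=\; -\Lambda\Sigma^{\beta-\alpha} \;+\; \alpha s_\alpha^*\,\Sigma^{\beta-1} \;-\; \frac{\alpha s_\alpha^*}{S}\, r^{*\alpha}(r^{*\beta})^T.
\end{equation*}

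The heart of the argument is a spectral identification. I would first verify that $L$ is self-adjoint with respect to the weighted inner product $\langle u,v\rangle_W := u^T \Sigma^{\beta-\alpha}v$ (a direct check that $WLW^{-1} = L^T$), then absorb the diagonal factors by the further rescaling $v = \Sigma^\sigma u$. The quadratic form should then collapse to the clean identity
\begin{equation*}
\langle v,-Lv\rangle_W \;=\; \big\langle u,\bigl(-\Delta_\alpha - \alpha s_\alpha^*(I - \pi_0)\bigr)u\big\rangle_\alpha,
\end{equation*}
where $\langle u,v\rangle_\alpha := \sum_i r_i^{*\alpha+1}\,u_i v_i$ is the natural weighted inner product in which $-\Delta_\alpha$ is self-adjoint positive semidefinite with kernel $\mathrm{span}(\mathbf 1)$, and $\pi_0$ is the $\langle\cdot,\cdot\rangle_\alpha$-orthogonal projection onto that kernel. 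Decomposing $u = u_0 + u_\perp$ along $\mathrm{span}(\mathbf 1)\oplus \mathbf 1^\perp$ kills $u_0$ in both terms and bounds the form below by $(\lambda_1(-\Delta_\alpha) - \alpha s_\alpha^*)\,\|u_\perp\|_\alpha^2$. A direct verification that the linearized conservation law from Proposition \ref{prop-hypersurface}, which in $u$-coordinates reads $\langle (r^*)^{1-\beta}, u\rangle_\alpha = 0$, is transverse to $\mathrm{span}(\mathbf 1)$ (indeed $\langle (r^*)^{1-\beta},\mathbf 1\rangle_\alpha = \sum_i r_i^{*\delta} > 0$ with $\delta = \alpha-\beta+2$) rules out nonzero tangent vectors with $u_\perp = 0$. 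Hence $-L$ is strictly positive-definite on the tangent space under the hypothesis $\lambda_1(-\Delta_\alpha) > \alpha s_\alpha^*$; the stronger assumption $\alpha s_\alpha^*\le 0$ makes the gap automatic since $\lambda_1(-\Delta_\alpha) > 0$.

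The hard part will be the spectral identification itself: tracking the rank-one $\nabla s_\alpha$-correction and the various $r$-dependent diagonal factors through the two weighted changes of inner product so that they reassemble into precisely $-\Delta_\alpha - \alpha s_\alpha^*(I - \pi_0)$ with no leftover $\beta$-dependent error. Once this algebraic cancellation is confirmed, applying Lyapunov's linearization theorem on the invariant leaf through $r(0)$ delivers the stated long-time existence and convergence.
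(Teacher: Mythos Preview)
Your proposal is correct and follows essentially the same route as the paper: linearize the autonomous system at $r^*$, identify the spectrum of the linearization with that of $-\Delta_\alpha - \alpha s_\alpha^*(I-\pi_0)$, use the invariant hypersurface from Proposition~\ref{prop-hypersurface} to remove the single zero eigenvalue, and invoke Lyapunov's linearization theorem. The only difference is presentational: the paper works directly in $r$-coordinates and carries out the spectral identification by explicitly diagonalizing $\widetilde\Lambda=\Sigma^{\frac{1-\alpha}{2}}\Lambda\Sigma^{\frac{1-\alpha}{2}}$ via an orthogonal matrix $P$, whereas you first pass to $g$-coordinates and then phrase the same computation as a quadratic-form identity in the weighted inner product $\langle\cdot,\cdot\rangle_\alpha$; your matrix $L$ is exactly $\Sigma^{\alpha-\beta}\,D_r\Gamma|_{r^*}\,\Sigma^{\beta-\alpha}$, hence similar to the paper's linearization, and your decomposition $u=u_0+u_\perp$ is the same as the paper's splitting along $e_0$ and $e_1,\dots,e_{N-1}$.
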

\begin{proof}
Denote $\Gamma_i(r)=s_{\alpha}r_i^{\beta}-K_ir_i^{\beta-\alpha}$, $1\leq i\leq N$, then the $(\alpha, \beta)$-flow (\ref{Def-unified-flow}) can be written as $\dot{r}=\Gamma(r)$, which is an autonomous ODE system. Differentiating $\Gamma$ with respect to $r$, we can get
\begin{equation*}
D_r\Gamma(r)=\Sigma^{\beta-\alpha}\left(-\Lambda+\alpha s_{\alpha}\left(\Sigma^{\alpha-1}-\frac{r^\alpha (r^\alpha)^T}{\|r\|_{\alpha+1}^{\alpha+1}}\right)-H
\right),
\end{equation*}
where
\begin{equation*}
H=(\beta-\alpha)\Sigma^{-1}\left(\begin{array}{ccc}
                             K_1-s_{\alpha}r_1^{\alpha} &   &   \\
                               & \ddots &   \\
                               &   & K_N-s_{\alpha}r_N^{\alpha}\\
                      \end{array}
                \right)+\frac{r^{\alpha}\left(K-s_{\alpha}r^{\alpha}\right)^T}{\|r\|_{\alpha+1}^{\alpha+1}}.
\end{equation*}
At constant $\alpha$-curvature metric points $r^*$, $H=0$, thus we have
\begin{equation}\label{Diff-gamma at-r^*}
D_r\Gamma|_{r^*}=\Sigma^{\beta-\alpha}\left(-\Lambda+\alpha s_{\alpha}\left(\Sigma^{\alpha-1}-\frac{r^\alpha (r^\alpha)^T}{\|r\|_{\alpha+1}^{\alpha+1}}\right)\right)_{r^*}.
\end{equation}
We follow the tricks in \cite{GX1,GX3} to derive the conclusion. Denote $\widetilde{\Lambda}=\Sigma^{\frac{1-\alpha}{2}}\Lambda \Sigma^{\frac{1-\alpha}{2}}$. Then
$$-\Delta_{\alpha}=\Sigma^{-\alpha}\Lambda \Sigma=\Sigma^{-\frac{1+\alpha}{2}}\widetilde{\Lambda}\Sigma^{\frac{1+\alpha}{2}},$$
which implies that $$\lambda_1(-\Delta_{\alpha})=\lambda_1(\widetilde{\Lambda}).$$
Choose a matrix $P\in O(N)$ such that $P^T\widetilde{\Lambda}P=diag\{0,\lambda_1(\widetilde{\Lambda}),\cdots,\lambda_{N-1}(\widetilde{\Lambda})\}$.
Suppose $P=(e_0,e_1,\cdots,e_{N-1})$,
where $e_i$ is the $(i+1)$-column of $P$.
Then $\widetilde{\Lambda} e_0=0$ and $\widetilde{\Lambda} e_i=\lambda_i e_i,\,1\leq i\leq N-1$,
which implies $e_0=r^\frac{\alpha+1}{2}/\|r^\frac{\alpha+1}{2}\|$ and $r^\frac{\alpha+1}{2}\perp e_i,\,1\leq i\leq N-1$.
Hence $\left(I-\frac{r^{\frac{\alpha+1}{2}}(r^{\frac{\alpha+1}{2}})^T}{\|r\|^{\alpha+1}_{\alpha+1}}\right)e_0=0$ and $\left(I-\frac{r^{\frac{\alpha+1}{2}}(r^{\frac{\alpha+1}{2}})^T}{\|r\|^{\alpha+1}_{\alpha+1}}\right)e_i=e_i$, $1\leq i\leq N-1$.
Furthermore,
\begin{equation*}
\begin{aligned}
-D_r\Gamma|_{r^*}=&\Sigma^{\beta-\alpha}\Sigma^{\frac{\alpha-1}{2}}P diag\left\{0,\lambda_1(\widetilde{\Lambda})-\alpha s_{\alpha}^*,\cdots,\lambda_{N-1}(\widetilde{\Lambda})-\alpha s_{\alpha}^*\right\}P^T\Sigma^{\frac{\alpha-1}{2}}\\[4pt]
=&\Sigma^{\frac{\beta-\alpha}{2}}\Sigma^{\frac{\beta-1}{2}}P diag\left\{0,\lambda_1(\widetilde{\Lambda})-\alpha s_{\alpha}^*,\cdots,\lambda_{N-1}(\widetilde{\Lambda})-\alpha s_{\alpha}^*\right\}P^T\Sigma^{\frac{\beta-1}{2}}\Sigma^{-\frac{\beta-\alpha}{2}}\\[4pt]
\thicksim &\big(\Sigma^{\frac{\beta-1}{2}}P\big)diag\left\{0,\lambda_1(\widetilde{\Lambda})-\alpha s_{\alpha}^*,\cdots,\lambda_{N-1}(\widetilde{\Lambda})-\alpha s_{\alpha}^*\right\}\big(\Sigma^{\frac{\beta-1}{2}}P\big)^T.
\end{aligned}
\end{equation*}
This shows that the eigenvalue of $D_r\Gamma|_{r^*}$ are all negative when restricted to the hypersurface $\sum_{i=1}^Nr_i^{\delta}(t)$ when $\delta\neq0$, or to the hypersurface $\prod_{i=1}^Nr_i(t)$ when $\delta=0$. Then the theorem is a consequence of the Lyapunov Stability Theorem in classical ODE theory.
\end{proof}

\begin{remark}
The $\alpha$=0 case is of special interest. The unified $(0,\beta)$-flow always converges to a metric $r^*$ with Cooper and Rivin's curvature $K_i\equiv$ constant, if $\|r(0)-r^*\|$ is small enough. Specifically, there exists a $\epsilon>0$, such that if the initial metric $r(0)$ satisfies $\|r(0)-r^*\|<\epsilon$, then the solution $r(t)$ to the unified $(0,\beta)$-flow exists for all time $t\geq0$ and converges to $r^*$ as $t\rightarrow+\infty$. This means that constant $K$-curvature metric $r^*$ is locally stable. More specifically, it seems that the $(0,1)$-flow, which lies at the intersection of $(\alpha,1)$-flow and $(\alpha,\alpha+1)$-flow (see Figure \ref{fig1}), shows much better convergence properties than other types of unified $(\alpha,\beta)$-flows. For more details, see \cite{GJ,GM1,GM2}.
\end{remark}
\begin{figure}[!ht]
\begin{center}
\includegraphics[width=2.4in,height=1.8in]{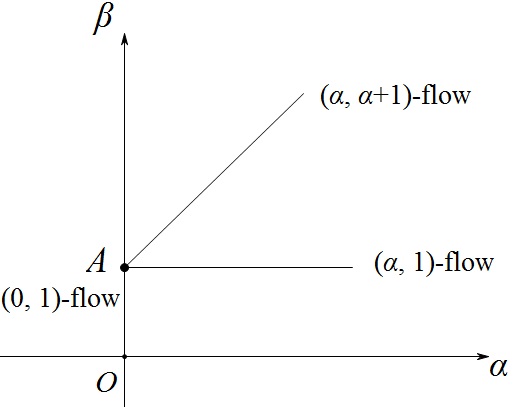}
\caption{($\alpha,\beta$) flows}
\label{fig1}
\end{center}
\end{figure}
\section{Calculating discrete constant $\alpha$-curvature metrics}
The $(\alpha,\beta)$-flow provides an efficient way to find constant $\alpha$-curvature metrics on triangulated manifolds. To see the power of this method, we take the 16-cell triangulation of $\mathbb{S}^3$ as an example, and see how the $(\alpha,\beta)$-flow can be used in concrete calculations.

Consider a standard geometric triangulation $\mathcal{T}_s$ of $\mathbb{S}^{3}$. Let $A_{1}=(1,0,0,0)$, $A_{2}=(-1,0,0,0)$, $B_{1}=(0,1,0,0)$, $B_{2}=(0,-1,0,0)$, $C_{1}=(0,0,1,0)$, $C_{2}=(0,0,-1,0)$, $D_{1}=(0,0,0,1)$, $D_{2}=(0,0,0,-1)$ be the vertices of $\mathcal{T}_s$, $P_{i}Q_{j}(\{P,Q\}\in\{A,B,C,D\},i,j=1,2)$
be the edges of $\mathcal{T}_s$, $P_{i}Q_{j}R_{k}$ $(\{P,Q,R\}\subset\{A,B,C,D\},i,j,k=1,2)$ be the faces of $\mathcal{T}_s$, and the regular tetrahedrons $A_{i}B_{j}C_{k}D_{l}(i,j,k,l=1,2)$ be the tetrahedrons of $\mathcal{T}_s$.

We then consider a topological triangulation $\mathcal{T}$ of $\mathbb{S}^{3}$, which has the same combinatorial structure with $\mathcal{T}_s$. $\mathcal{T}$ is often called the 16-cell triangulation of $\mathbb{S}^3$ in previous literature. It is easy to see that $\mathcal{T}$ carries a trivial constant $\alpha$-curvature metric for each $\alpha$. In fact, let $r_i=1$, then $r$ is exactly a constant $\alpha$-curvature metric. We want to know whether there are other (up to scaling) constant $\alpha$-curvature metrics? By evolving the unified $(\alpha,\beta)$-flow (\ref{Def-unified-flow}) with appropriate initial value, we can easily get many constant $\alpha$-curvature metrics (up to scaling). For example, when $\alpha=1$, we denote the metrics on $A_{1}$, $A_{2}$, $B_{1}$, $B_{2}$, $C_{1}$, $C_{2}$, $D_{1}$, $D_{2}$
by $r_{1}^{+},r_{1}^{-},r_{2}^{+},r_{2}^{-},r_{3}^{+},r_{3}^{-},r_{4}^{+},r_{4}^{-}$ respectively. Then
$$r_{1}^{\pm}  =r_{2}^{\pm}=\frac{1}{12}, \:\:r_{3}^{\pm} =r_{4}^{\pm}=\frac{1}{6}$$
is exactly a constant $1$-curvature metric. For this case,
\begin{align*}
K_{1}^{\pm} & =K_{2}^{\pm}=8\pi-16\arccos\frac{1}{\sqrt{10}},\\
K_{3}^{\pm} & =K_{4}^{\pm}=12\pi-8\arccos\frac{3}{5}-16\arccos\frac{1}{\sqrt{10}}.
\end{align*}
Furthermore, we have $\cfrac{K_{i}^{\pm}}{r_{i}^{\pm}}=12(8\pi-16\arccos\frac{1}{\sqrt{10}})\thickapprox -61.8$ for each $1\leq i \leq4$.\\[8pt]

\noindent
\textbf{Acknowledgements:} The first author would like to show his greatest respect to Professor Gang Tian for constant supports and encouragements. The research is supported by National Natural Science Foundation of China under grant (No.11501027), and Fundamental Research Funds for the Central Universities (Nos. 2015JBM103, 2014RC028 and 2016JBM071).

\footnotetext{{\it Huabin Ge}, Department of Mathematics, Beijing Jiaotong University, Beijing 100044, P.R. China, Email:hbge@bjtu.edu.cn.  {\it Ming Li}, Corresponding author. LSEC, ICMSEC, Academy of Mathematics and Systems Science, Chinese Academy of Sciences, Beijing 100190, China. Email: liming@lsec.cc.ac.cn}

\end{document}